\newtheorem{thm}{Theorem} 
\newtheorem{prop}{Proposition}
\newtheorem{lem}{Lemma}
\newtheorem{cor}{Corollary}
\theoremstyle{definition}
\newtheorem{rem}{Remark}
\DeclareMathOperator{\rank}{\mathrm{rank}}
\DeclareMathOperator{\alg}{\mathrm{alg}}
    \DeclareFontFamily{U}{wncy}{}
    \DeclareFontShape{U}{wncy}{m}{n}{<->wncyr10}{}
    \DeclareSymbolFont{mcy}{U}{wncy}{m}{n}
    \DeclareMathSymbol{\Sha}{\mathord}{mcy}{"58}
\numberwithin{equation}{section}
\DeclareSymbolFont{bbold}{U}{bbold}{m}{n}
\DeclareSymbolFontAlphabet{\mathbbold}{bbold}
\DeclareSymbolFont{bbold}{U}{bbold}{m}{n}
\DeclareSymbolFontAlphabet{\mathbbold}{bbold}
\subjclass[2020]{15A18, 93C73}
\keywords{Rank-m perturbation, eigenspectra, matrix theory}
\title{Spectral perturbation by rank $m$ matrices}
\author{Jonathan L. Merzel, J\'an Min\'a\v{c}, Tung T. Nguyen, Federico W. Pasini}
\address{Department of Mathematics, Soka University of America, 1 University Drive, Aliso Viejo, CA 92656}
\email{jmerzel@soka.edu} 
\address{Department of Mathematics, Western University, London, Ontario, Canada N6A 5B7}
\email{minac@uwo.ca}
\address{Department of Mathematics, The University of Western Ontario, London, ON, Canada, N6A 5B7}
\email{tungnt@uchicago.edu}
\address{Huron University College}
\email{fpasini@uwo.ca }
\date{\today}
\begin{document}
\maketitle
\begin{abstract}
Let $A$ and $B$ designate $n\times n$ matrices with coefficients in a field $F$.  In this paper, we completely answer the following question:  For $A$ fixed, what are the possible characteristic polynomials of $A+B$, where $B$ ranges over matrices of rank $\le m$?

\end{abstract}
\section{Introduction} 
The perturbation of a given matrix by another low-rank matrix is an important topic in mathematics, physics, and engineering. For example, it has been used to study the stability and controllability of dynamical systems,  the Baik-Ben Arous-Péché (BBP) phase transition, and quantum chaotic scattering (see \cite{[BBP]}, \cite{[FS]}, \cite{[KN]}, \cite{[Peche]}, \cite{[Shinners]}). Consequently, the spectral perturbation problem has been extensively studied in the literature. For interested readers, we refer to some prominent works on this topic (see for example \cite{[BMRR]}, \cite{[Kru]}, \cite{[MMRR]}, \cite{[MR]}, \cite{[RW]}).

A particularly interesting question in the study of low-rank perturbation is the following: For a fixed $n \times n$ matrix $A$ with coefficients in a field $F$, what are the possible characteristic polynomials of $A+B$, where $B$ ranges over matrices with coefficients in $F$ and of rank $\le m$?  In this article, we answer this question completely without any restriction on the field $F$. 


To state our main theorem, we first introduce some notation. For a polynomial $p(x) \in F[x]$  and $\lambda \in \overline{F}$ (a fixed algebraic closure of $F$) we write $m_{\lambda}(p(x))$ for the multiplicity of $\lambda$ as a zero of $p(x)$ (taking this to be 0 if $\lambda$ is not a zero of $p(x)$). The characteristic polynomial of a matrix $A$ will be designated as $p_A$;
for $\lambda \in\overline{F}$ we denote by $\alg_{\lambda }(A)$ the algebraic
multiplicity of $\lambda $ as an eigenvalue of $A$, that is, \ $\alg_{\lambda
}(A)=$\ \ $m_{\lambda }(p_{A})$. Our principal result is the following. 
\begin{thm} 
Let $A$ be an $n\times n$ matrix over a field $F$ and $q(x) \in F[x]$ be monic of degree $n$.  Then there exists an $n\times n$ matrix $B$ over $F$ of rank $\le m$ such that $p_{A+B}=q$ if and only if for each eigenvalue $\lambda$ of A, \[  m_{\lambda}(q) \geq \alg_{\lambda}(A)-\sum_{j=1}^m k_{\lambda,j}  \]
where $k_{\lambda,1}\ge k_{\lambda,2} \ge \cdots \ge k_{\lambda,m}$ are the sizes of the largest $m$ Jordan blocks for $\lambda$ in the Jordan form for $A$.
\end{thm}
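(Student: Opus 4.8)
The statement is an ``if and only if''; the two directions use genuinely different ideas, so we sketch them separately.

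\textbf{Necessity.} We may extend scalars to $\overline F$ (the rank of $B$ is unchanged), fix an eigenvalue $\lambda$ of $A$, and---replacing $A$ by $A-\lambda I$ and $q(x)$ by $q(x+\lambda)$---assume $\lambda=0$. Write $G=\ker(A^n)$ for the generalized $0$-eigenspace, so $\dim G=\alg_0(A)$, let $N=A|_G$ (nilpotent), and let $k_1\ge k_2\ge\cdots$ be its Jordan block sizes. The plan is to exhibit one $(A+B)$-invariant subspace $V\subseteq G$ on which $A+B$ acts nilpotently; then $V\subseteq\ker((A+B)^n)$, hence $m_0(q)=\alg_0(A+B)\ge\dim V$. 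Take $V$ to be the largest $A$-invariant subspace contained in $W:=\ker(B)\cap G$. Since $BV=0$ we get $(A+B)V=AV\subseteq V$ and $(A+B)|_V=A|_V$ is nilpotent, as required. As $\operatorname{codim}_G W=\dim(B(G))\le\rank(B)\le m$, the desired inequality follows from the purely linear‑algebraic claim: \emph{if $W'\subseteq G$ has codimension $c$, then the largest $N$-invariant subspace of $W'$ has dimension at least $\dim G-(k_1+\cdots+k_c)$} (then use $k_1+\cdots+k_c\le k_1+\cdots+k_m$).

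Passing to the dual space turns ``largest invariant subspace inside $W'$'' into ``smallest invariant subspace containing $(W')^{\perp}$'', i.e.\ the submodule generated by $(W')^{\perp}$; and since $N$ is similar to its transpose, the claim is equivalent to: \emph{the $N$-submodule $F[N]S=\sum_{i\ge0}N^iS$ generated by a $c$-dimensional subspace $S$ has $\dim F[N]S\le k_1+\cdots+k_c$.} Set $T_i=S+NS+\cdots+N^{i-1}S$. Two elementary observations suffice: each quotient satisfies $\dim(T_i/T_{i-1})\le\dim(N^{i-1}S)\le c$, and the tail satisfies $\dim\big(F[N]S/T_{i_0}\big)\le\rank(N^{i_0})$ because $F[N]S=T_{i_0}+N^{i_0}(F[N]S)$. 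Cutting the filtration at any $i_0$ therefore gives $\dim F[N]S\le i_0\,c+\rank(N^{i_0})$, and minimizing over $i_0$ yields exactly $k_1+\cdots+k_c$, via the identity $\min_{i_0\ge0}\big(i_0\,c+\rank(N^{i_0})\big)=\sum_{i\ge1}\min(c,a_i)=\sum_{j\le c}k_j$, where $a_i=\rank(N^{i-1})-\rank(N^{i})$ is the number of Jordan blocks of size $\ge i$ (the conjugate partition of $(k_j)$). Note a crude union bound over the powers $N^i$ gives only $\dim F[N]S\le c\,k_1$, which is far too weak (it already fails for a single block when $c\ge2$); one really needs both estimates on $T_\bullet$ and the optimization of the cutoff.

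\textbf{Sufficiency.} Here $B$ must be built over $F$, so we use the primary rational canonical form: up to similarity $A=\bigoplus_{i,t}C_{\pi_i^{k_{i,t}}}$, indexed by the monic irreducible factors $\pi_i\mid p_A$ and by $t=1,\dots,s_i$, with $k_{i,1}\ge\cdots\ge k_{i,s_i}$. All roots of a fixed $\pi_i$ share the same $A$-Jordan structure, and at those eigenvalues the hypothesis forces $\pi_i^{\mu_i}\mid q$, where $\mu_i=k_{i,m+1}+\cdots+k_{i,s_i}$ (the sum of the exponents beyond the largest $m$). Hence $F_{\mathrm{mand}}:=\prod_i\pi_i^{\mu_i}$ divides $q$; put $\tilde q=q/F_{\mathrm{mand}}$. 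Split $A=A_{\mathrm{mand}}\oplus A_{\mathrm{free}}$, where $A_{\mathrm{mand}}$ is the sum of the blocks with $t>m$ (so $p_{A_{\mathrm{mand}}}=F_{\mathrm{mand}}$) and $A_{\mathrm{free}}$ is the sum of the rest; a degree count gives $\deg\tilde q=\dim A_{\mathrm{free}}$. Leaving $A_{\mathrm{mand}}$ untouched, it is enough to produce $B'$ of rank $\le m$ on the space of $A_{\mathrm{free}}$ with $p_{A_{\mathrm{free}}+B'}=\tilde q$, for then $B=0\oplus B'$ has rank $\le m$ and $p_{A+B}=F_{\mathrm{mand}}\cdot\tilde q=q$.

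By construction $A_{\mathrm{free}}$ has at most $m$ Jordan blocks at every eigenvalue, hence at most $m$ invariant factors, so it is similar to $C_{g_1}\oplus\cdots\oplus C_{g_R}$ with $R\le m$. The crux is then a constructive lemma: \emph{a direct sum $C_{g_1}\oplus\cdots\oplus C_{g_R}$ of companion matrices can be perturbed, by a matrix of rank $\le R$, into the companion matrix of any prescribed monic polynomial of degree $\sum_j\deg g_j$.} One simply ``chains'' the blocks: working in the natural basis, redefine the image of the last basis vector of block $j$ to be the first basis vector of block $j{+}1$ for $j<R$, and redefine the image of the last basis vector of block $R$ to be the desired feedback vector; after reindexing the whole basis into one cycle this is literally a companion matrix, and since only $R$ columns were altered the perturbation has rank $\le R$. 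Applying this to $A_{\mathrm{free}}$ with target $\tilde q$ finishes the proof. The hard part throughout is the necessity direction---concretely the submodule dimension bound $\dim F[N]S\le k_1+\cdots+k_c$---while sufficiency becomes routine once the chaining lemma is spotted.
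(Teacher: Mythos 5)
Your proposal is correct, but the necessity direction takes a genuinely different route from the paper's. The paper proves necessity from the telescoping identity $(A+B)^k-A^k=\sum_{i}A^iB(A+B)^{k-i-1}$, which gives $\rank((A+B-\lambda I)^k)\le k\,\rank(B)+\rank((A-\lambda I)^k)$, and then observes that the optimal exponent is $k=k_{\lambda,m}$. You instead produce an explicit $(A+B)$-invariant subspace $V\subseteq\ker(B)\cap\ker(A-\lambda I)^n$ on which $A+B$ is nilpotent (after translating $\lambda$ to $0$), and bound $\dim V$ from below by dualizing to the statement that a $c$-generated $F[N]$-submodule has dimension at most $k_1+\cdots+k_c$; your filtration argument and the identity $\min_{i_0}\bigl(i_0c+\rank(N^{i_0})\bigr)=\sum_{j\le c}k_j$ are correct, and indeed the same combinatorial minimization over the conjugate partition underlies the paper's Remark \ref{rem:min}, so the two proofs share a core but package it differently: the paper's is shorter and purely computational, while yours is structural and explains \emph{where} the surviving generalized eigenvectors live. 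For sufficiency your argument is essentially the paper's: your $F_{\mathrm{mand}}=\prod_i\pi_i^{\mu_i}$ equals the product $p_1\cdots p_{s-m}$ of the small invariant factors (the paper's Proposition \ref{prop:condition_rational}), and your chaining lemma --- rerouting the last basis vector of each companion block into the next block and installing the feedback vector of $\tilde q$ at the end, altering one column per block --- is exactly the column replacement in the paper's Proposition \ref{prop:sufficient}. The only stylistic difference is that you pass through the primary rational canonical form to extract the divisibility condition, where the paper works directly with invariant factors; both are valid over an arbitrary field, including the inseparable case, since the divisibility argument only uses that distinct irreducible factors of $q$ are coprime.
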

The structure of our article is as follows. In Section 2, we derive the necessary condition in the theorem for the existence of the matrix $B$, using a rank estimate. In Section 3, we show that the necessary condition is also sufficient. Additionally, we provide a concrete demonstration of our proof in the case $m=2$.

\section{Necessary conditions using the Jordan canonical form} 
As indicated above, we will fix a matrix $A$, and let $B$ be a matrix of rank less than or equal to the positive integer $m$.
In this section, we develop a necessary condition on a monic polynomial $q$ of degree $n$ for $q=p_{A+B}$ for some such $B$.

\begin{lem} 
The following identity holds: 
\[ (B+A)^k=\left[ \sum_{m=0}^{k-1} A^m B (B+A)^{k-m-1} \right]+A^k .\] 

\end{lem}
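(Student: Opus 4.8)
The plan is to prove the identity by induction on $k$. It is worth noticing first that the formula is nothing but the noncommutative telescoping identity
\[ X^k - Y^k \;=\; \sum_{m=0}^{k-1} Y^m\,(X-Y)\,X^{\,k-1-m} \]
specialized to $X = B+A$ and $Y = A$, so that $X-Y = B$, $Y^m = A^m$, and $X^{\,k-1-m} = (B+A)^{k-1-m}$. One could therefore prove the general telescoping formula (in which the intermediate terms $Y^{m}X^{\,k-m}$ and $Y^{m}X^{\,k-m}$ cancel in consecutive pairs, leaving only $X^k$ and $-Y^k$) and then substitute. I will instead carry out the direct induction, which is entirely self-contained.

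For the base case $k=1$, the right-hand side is $A^0 B (B+A)^0 + A^1 = B + A = (B+A)^1$. For the inductive step, suppose the formula holds with $k$ replaced by $k-1$. Expand
\[ (B+A)^k = B\,(B+A)^{k-1} + A\,(B+A)^{k-1}. \]
The first term on the right is exactly the $m=0$ summand of the target expression. For the second term, substitute the inductive hypothesis for $(B+A)^{k-1}$ and left-multiply by $A$: each summand $A^m B (B+A)^{\,k-1-m-1}$ turns into $A^{m+1} B (B+A)^{\,k-1-m-1}$, and the trailing $A^{k-1}$ turns into $A^k$. Reindexing $m \mapsto m-1$ rewrites this as $\sum_{m=1}^{k-1} A^m B (B+A)^{\,k-m-1} + A^k$; adjoining the $m=0$ term recovered above produces $\sum_{m=0}^{k-1} A^m B (B+A)^{\,k-m-1} + A^k$, which is the claim.

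There is no genuine obstacle here: the argument is a routine induction (or a one-line consequence of telescoping). The only point requiring attention is the bookkeeping of the exponents under the reindexing, together with the minor notational caveat that the summation letter is $m$, the same symbol used for the rank bound elsewhere in the paper; within this lemma it should be read strictly as a local bound variable.
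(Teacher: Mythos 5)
Your induction is correct: the base case $k=1$ checks out, and in the inductive step the decomposition $(B+A)^k = B(B+A)^{k-1} + A(B+A)^{k-1}$, followed by substituting the hypothesis into the second term and reindexing $m\mapsto m+1$, does reassemble the full sum together with $A^k$. The exponent bookkeeping is right. The paper takes the other road you mention in your preamble but decline to walk: it writes
\[ (B+A)^k - A^k = \sum_{i=0}^{k-1}\left[ A^i(B+A)^{k-i} - A^{i+1}(B+A)^{k-i-1}\right] \]
directly, factors each bracket as $A^i\bigl((B+A)-A\bigr)(B+A)^{k-i-1} = A^iB(B+A)^{k-i-1}$, and is done in three lines; your induction is essentially this telescoping sum unrolled one term at a time. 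What the telescoping buys is brevity and the fact that the cancellation is visible at a glance; what your induction buys is a fully mechanical verification with no sum to collapse, at the cost of a little more writing. Both arguments use only ring axioms, so both establish the lemma for arbitrary elements of any ring, as the paper's remark notes. One cosmetic slip in your side remark: the two ``intermediate terms'' you claim cancel in consecutive pairs are written with identical exponents ($Y^mX^{k-m}$ twice), which is evidently a typo for the pair $Y^mX^{k-m}$ and $Y^{m+1}X^{k-m-1}$; since that sentence is not part of the proof you actually give, it does not affect correctness.
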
 
\begin{rem}  The result in fact holds for arbitrary elements $A,B$ in any ring, as the proof below shows. We thank the referee for suggesting the following simplified version of our original proof.

\end{rem}
\begin{proof}
    We have 
    \begin{align*}
    (B+A)^k - A^k &= \sum_{i=0}^{k-1} \left[A^i (B+A)^{k-i}-A^{i+1}(B+A)^{k-i-1} \right] \\
                  &= \sum_{i=0}^{k-1} A^i((B+A)-A)(B+A)^{k-i-1} \\
                  &= \sum_{i=0}^{k-1} A^i B (B+A)^{k-i-1}.
    \end{align*}
\end{proof} 
We provide another pictorial proof for Lemma 1.

\begin{proof}
Terms in the expression of $(A+B)^k$ correspond to paths of length $k-1$ in the following labeled graph (with $2k$ nodes).

\xymatrix{
&&&& A \ar[r] \ar[rd] & A \ar[r] \ar[rd] & A \ldots  \ar[r] \ar[rd] & A\\
&&&& B \ar[r] \ar[ru] & B \ar[r] \ar[ru] & B \ldots \ar[r] \ar[ru] & B \\
}
Apart from the term $A^k$, each term has an initial block of the form $A^mB$, $0 \leq m \leq k-1$. Visualizing that block in the graph above (starting from the left), and considering all terms which begin with that block, we see that they correspond to continuing paths through the expansion $(B+A)^{k-m-1}$.
\end{proof}
\begin{cor} \label{cor:rank_estimate}
For each $k$ 
\[ \rank((A+B)^k) \leq k \rank(B)+\rank(A^k).\]

\end{cor}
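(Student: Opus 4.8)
The plan is to bound the rank of $(A+B)^k$ directly from the identity in Lemma 1. Writing
\[
(A+B)^k = \sum_{m=0}^{k-1} A^m B (A+B)^{k-m-1} + A^k,
\]
the image of $(A+B)^k$ is contained in the sum of the images of the $k+1$ summands on the right-hand side. The last summand contributes a space of dimension $\rank(A^k)$. For each of the remaining $k$ summands $A^m B (A+B)^{k-m-1}$, the image is contained in the image of $A^m B$, which factors through the image of $B$; hence each contributes a space of dimension at most $\rank(B)$. Since the rank of a sum of subspaces is at most the sum of their dimensions, we obtain
\[
\rank\big((A+B)^k\big) \le \sum_{m=0}^{k-1}\rank\big(A^m B (A+B)^{k-m-1}\big) + \rank(A^k) \le k\,\rank(B) + \rank(A^k).
\]

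The key steps, in order: first invoke Lemma 1 to express $(A+B)^k - A^k$ as the stated sum; second, use subadditivity of rank under addition of matrices (equivalently, $\operatorname{im}(X+Y)\subseteq\operatorname{im}(X)+\operatorname{im}(Y)$ and $\dim(U+V)\le\dim U+\dim V$); third, use submultiplicativity of rank, namely $\rank(XY)\le\min(\rank X,\rank Y)$, applied to each term $A^m B(A+B)^{k-m-1}$ with the factor $B$ on the appropriate side to get the bound $\rank(B)$; finally, count that there are exactly $k$ such terms plus the single term $A^k$.

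I do not anticipate a genuine obstacle here — the corollary is a formal consequence of Lemma 1 together with the two elementary inequalities (subadditivity of rank over sums, submultiplicativity of rank over products). The only point requiring a modicum of care is making sure the factorization of each middle term genuinely passes through $\operatorname{im}(B)$ or, dually, has row space contained in the row space of $B$, so that the bound is $\rank(B)$ rather than something larger; writing $A^m B (A+B)^{k-m-1} = A^m \big(B (A+B)^{k-m-1}\big)$ and bounding by $\rank\!\big(B(A+B)^{k-m-1}\big)\le\rank(B)$ settles this cleanly.
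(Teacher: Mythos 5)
Your argument is correct and is essentially the paper's own proof: both invoke Lemma 1's identity and then combine subadditivity of rank over sums with the bound $\rank(XY)\le\min(\rank X,\rank Y)$ applied to each term $A^m B (A+B)^{k-m-1}$. You simply spell out the image-space bookkeeping that the paper leaves implicit.
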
 

\begin{proof}
This is a direct consequence of Lemma 1 and the facts that for two matrices $M,N$ 
\[ \rank(MN) \leq \min \{\rank(M), \rank(N) \}, \]
and 
\[ \rank(M+N) \leq \rank(M)+\rank(N) .\] 
\end{proof}

Let $C$ be a matrix defined over $F$ and $\lambda \in \overline{F}$. As in the statement of the main theorem, we denote by $\alg_{\lambda}(C)$ the algebraic multiplicity of $\lambda$ with respect to $C$. More precisely, 
\[ \alg_{\lambda}(C)=m_{\lambda}(p_{C}(x)).\]
Let $k_{ \lambda,1} \geq k_{\lambda, 2}  \geq \ldots $ be the sizes of all Jordan blocks of $A$ with $\lambda$ on the diagonal (here, to avoid burdening the notation, we do not explicitly fix the number of Jordan blocks, but we think of $k_{\lambda,j}$ as an eventually zero sequence of integers or equivalently we adopt the convention that a $0 \times 0$ Jordan block is an empty block).

By Corollary \ref{cor:rank_estimate}, we have 
\[ \rank((A+B-\lambda I_n)^{k_{\lambda,i}}) \leq k_{\lambda,i} \rank(B)+\rank((A-\lambda I_n)^{k_{\lambda,i}}) \leq m k_{\lambda,i} +\rank((A-\lambda I_n)^{k_{\lambda,i}}) .\]
It is straightforward to see that 
\begin{align*}
 \rank((A-\lambda I_n)^{k_{\lambda,i}})&=(n-\alg_{\lambda}(A))+\sum_{j=1}^{i} (k_{\lambda,j}-k_{\lambda,i}). \\
\end{align*}

Therefore 
\begin{equation}\label{eq:rank estimate}
    \rank((A+B-\lambda I_n)^{k_{\lambda,i}}) \leq n-\alg_{\lambda}(A)+ (m-i)k_{\lambda,i} +\sum_{j=1}^i k_{\lambda,j}.
\end{equation}

\begin{rem} \label{rem:min}
Our goal here is to make sure that the right-hand side is as small as possible as a function of $i$. 
Equivalently, we want to minimize the sum $s_i=(m-i)k_{\lambda,i}+\sum_{j=1}^i k_{\lambda,j}$. We claim that this sum attains its minimum at $i=m$. In fact, we have 
\begin{align*}
s_i-s_{i+1} &= \left[(m-i)k_{\lambda,i}+\sum_{j=1}^i k_{\lambda,j} \right] - \left[(m-i-1)k_{\lambda,i+1}+\sum_{j=1}^{i+1} k_{\lambda,j} \right] \\
&=(m-i)(k_{\lambda,i}-k_{\lambda,i+1}),
\end{align*}
From this equality, we see that the sequence $s_i$ is nonincreasing for $i \leq m$ and nondecreasing for $i \geq m$. It, therefore, attains its minimum at $i=m$.
\end{rem}

Taking $i=m$ in estimate \eqref{eq:rank estimate}, which is the optimal choice by Remark \ref{rem:min}, we see that 
\[ \rank((A+B-\lambda I_n)^{k_{\lambda,m}}) \leq n-\alg_{\lambda}(A) +\sum_{j=1}^m k_{\lambda,j}.\]
Consequently, we obtain: 

\begin{prop} \label{prop:condition} 
Let $A$ be a given matrix. Suppose $B$ is a matrix with rank at most $m$ such that $p_{A+B}(x)=q(x)$. Then 
\begin{equation}\label{eq:unnecessary} 
 \text{for each eigenvalue $\lambda$ of A, }m_{\lambda}(q) \geq \alg_{\lambda}(A)-\sum_{j=1}^m k_{\lambda,j} . \end{equation}
\end{prop}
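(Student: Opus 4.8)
The plan is to upgrade the rank bound just obtained,
\[ \rank\bigl((A+B-\lambda I_n)^{k_{\lambda,m}}\bigr) \leq n-\alg_{\lambda}(A)+\sum_{j=1}^m k_{\lambda,j}, \]
into the claimed inequality for $m_{\lambda}(q)$ by invoking the standard interpretation of algebraic multiplicity in terms of ranks of powers.

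First I would record two elementary facts, valid for any $n\times n$ matrix $C$ over $F$ and any $\lambda\in\overline F$ (ranks are unchanged under the extension $F\hookrightarrow\overline F$, so passing to $\overline F$ is harmless even when $\lambda\notin F$). First, the generalized eigenspace of $C$ at $\lambda$ is $\ker\bigl((C-\lambda I_n)^n\bigr)$, and its dimension equals $\alg_{\lambda}(C)$; hence, by rank--nullity, $\rank\bigl((C-\lambda I_n)^n\bigr)=n-\alg_{\lambda}(C)$. Second, the subspaces $\ker\bigl((C-\lambda I_n)^k\bigr)$ form an increasing chain in $k$, so $\rank\bigl((C-\lambda I_n)^k\bigr)$ is non-increasing in $k$; in particular, since $k_{\lambda,m}\le n$, one has $\rank\bigl((C-\lambda I_n)^n\bigr)\le\rank\bigl((C-\lambda I_n)^{k_{\lambda,m}}\bigr)$.

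Taking $C=A+B$ and chaining these observations with the rank estimate gives
\[ n-\alg_{\lambda}(A+B)=\rank\bigl((A+B-\lambda I_n)^n\bigr)\le\rank\bigl((A+B-\lambda I_n)^{k_{\lambda,m}}\bigr)\le n-\alg_{\lambda}(A)+\sum_{j=1}^m k_{\lambda,j}, \]
so that $\alg_{\lambda}(A+B)\ge\alg_{\lambda}(A)-\sum_{j=1}^m k_{\lambda,j}$; and since $p_{A+B}=q$ by hypothesis, $\alg_{\lambda}(A+B)=m_{\lambda}(q)$, which is exactly \eqref{eq:unnecessary}.

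The substantive work has already been done in Lemma 1 and Corollary \ref{cor:rank_estimate}, so I do not expect a serious obstacle here; the only point needing care is the passage from the rank of a single, possibly small, power $(A+B-\lambda I_n)^{k_{\lambda,m}}$ to the algebraic multiplicity $\alg_\lambda(A+B)$, which is handled by the monotonicity of ranks of powers together with $k_{\lambda,m}\le n$. The degenerate case in which $A$ has fewer than $m$ Jordan blocks at $\lambda$ (so $k_{\lambda,m}=0$) is automatic: there $\sum_{j=1}^m k_{\lambda,j}=\alg_{\lambda}(A)$ and the asserted bound reduces to $m_{\lambda}(q)\ge 0$.
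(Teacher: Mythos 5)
Your proof is correct and follows essentially the same route as the paper: both rest on the already-derived rank estimate $\rank\bigl((A+B-\lambda I_n)^{k_{\lambda,m}}\bigr)\le n-\alg_{\lambda}(A)+\sum_{j=1}^m k_{\lambda,j}$ and then convert it into a lower bound on the algebraic multiplicity of $\lambda$ for $A+B$, treating the degenerate case $k_{\lambda,m}=0$ separately. The only cosmetic difference is in how the conversion is done --- the paper uses $\alg_0(M)=\alg_0(M^k)$ together with ``geometric $\le$ algebraic multiplicity,'' while you use the generalized-eigenspace identity $\rank\bigl((C-\lambda I_n)^n\bigr)=n-\alg_{\lambda}(C)$ plus monotonicity of ranks of powers; both are standard and equally valid.
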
 

\begin{proof}
We remark that if $k_{\lambda, m}=0$ then the above statement is trivially true. When $k_{\lambda, m} \neq 0$, we have 
\begin{align*}
m_\lambda(q) &=m_\lambda(p_{A+B})=\alg_\lambda(A+B)=\alg_0(A+B-\lambda I_n)=\alg_0((A+B-\lambda I_n)^{k_{\lambda,m}})\\
&\ge n-\rank((A+B-\lambda I_n)^{k_{\lambda,m}})\\
&\ge \alg_{\lambda}(A)-\sum_{j=1}^m k_{\lambda,j}.
\end{align*}
\end{proof}

By the courtesy of the referee, we provide below their alternative proof for Proposition \ref{prop:condition}. We thank the referee for sharing this proof.
\begin{proof}
    It suffices to consider the situation of the eigenvalue $0$. Set $s: = \sum_{i=m+1}^{\infty} k_{0,i}(A)$ and $j \in \llbracket 0, s-1 \rrbracket$. It suffices to prove that the coefficient of $p_{A+B}$ on $x^j$ equals $0.$ To do so, we see $A$ and $B$ as matrices with entries in $\overline{F}$ and reduce the situation to the case where $A$ is in Jordan canonical form, with the first diagonal blocks nilpotent and with respective sizes $k_{0,1}(A), k_{0,2}(A), \ldots$ We set $J_r :=  \llbracket 1 + \sum_{i=1}^{r-1} k_{0, i}(A), \sum_{i=1}^r k_{0,i}(A) \rrbracket$ for $r \geq 1.$

    Classically, the coefficient of $p_{A+B}$ on $x^j$ is the sum of all principal $(n-j) \times (n-j)$ minors of $A +B.$ We simply prove that all these minors are equal to zero. So, let $I \subseteq \llbracket 1, n \rrbracket$ be of cardinality $n-j$, and denote by $A_{I}, B_{I},$ and $(A+B)_{I}$ the corresponding principal submatrices. Now the key is to note that $\rank(A_{I})+\rank(B_I) < n-j.$ Simply, $\rank(B_I) \leq m$ on the one hand, and on the other hand, seeing that $A_I$ is block diagonal with its diagonal blocks being principle submatrices of the Jordan cells of $A$, we see that its null space has dimension greater than or equal to the number of integers $r \geq 1$ such that $I \cap J_{r} \neq \emptyset$ (note that every principal submatrix of a Jordan cell is singular because its first column equals zero). Now, assuming that $\rank(A_I) + \rank(B_I) \geq n-j$, we would obtain $\rank(A_I) \geq n-j-m$, and hence there would be at most $m$ integers $r \geq 1$ such that $ I \cap J_{r} \neq \emptyset$; obviously, because $(k_{0,i}(A))_{i}$ is in non-increasing order this would yield $|I| \leq n- \sum_{k=m+1}^{\infty} k_{0,i}(A)=n-s$, contradicting the assumption that $|I| = n-j>n-s.$
\end{proof}

\section{Sufficient conditions using the rational canonical form}
We now show that condition \eqref{eq:unnecessary} is sufficient for the existence of $B$ defined over $F$ (of rank $\le m$) with $p_{A+B}=q$.
As above, $A$ is a fixed $n\times n$ matrix over $F$  and $q(x)\in F[x]$ is monic of degree $n
$.

\bigskip

Assume now without loss of generality that $A$ is in rational canonical form, 
\begin{equation*}
A=\left[ 
\begin{array}{llll}
\boxed{p_s} &  &  & 0 \\ 
& \boxed{p_{s-1}} &  &  \\ 
&  & \ddots &  \\ 
0 &  &  & \boxed{p_1}%
\end{array}%
\right]
\end{equation*}

where $p_{1} \mid p_{2} \mid \cdots \mid p_{s}$ and \boxed{p_i} is the companion matrix of 
$p_{i}$.  (Note that $p_{1},\dots ,p_{s}$ $\in F[x]$.) We have $%
p_A=\prod\limits_{i=1}^{s}p_{i}$.

We first reformulate the necessary condition \eqref{eq:unnecessary} in terms of $p_{1},\dots ,p_{s}$. 
\begin{prop} \label{prop:condition_rational} 
For $q(x) \in F[x]$, the condition  \eqref{eq:unnecessary}
is equivalent to $p_1 p_2 \cdots p_{s-m} \mid q.$

\end{prop}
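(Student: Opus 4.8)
The plan is to translate both sides of the asserted equivalence into statements about the invariant factors $p_1 \mid p_2 \mid \cdots \mid p_s$ of $A$, and then check they agree coefficient-by-coefficient in the localization at each linear factor over $\overline{F}$. First I would fix an eigenvalue $\lambda$ of $A$ (equivalently, a root $\lambda$ of $p_s$ in $\overline{F}$) and record the standard dictionary between the rational canonical form and the Jordan form: the multiplicity $\alg_\lambda(A) = m_\lambda(p_A) = \sum_{i=1}^s m_\lambda(p_i)$, and the sizes of the Jordan blocks for $\lambda$ are exactly the nonzero integers among $m_\lambda(p_1) \le m_\lambda(p_2) \le \cdots \le m_\lambda(p_s)$. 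Hence, since the $p_i$ are ordered by divisibility, the $m$ largest Jordan blocks for $\lambda$ have sizes $m_\lambda(p_s), m_\lambda(p_{s-1}), \ldots, m_\lambda(p_{s-m+1})$ (padding with zeros if $s < m$), so that
\[
\sum_{j=1}^m k_{\lambda,j} = \sum_{i=s-m+1}^{s} m_\lambda(p_i)
\]
with the convention $p_i = 1$ for $i \le 0$.

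Next I would substitute this into condition \eqref{eq:unnecessary}. For each eigenvalue $\lambda$, the right-hand side becomes
\[
\alg_\lambda(A) - \sum_{j=1}^m k_{\lambda,j} = \sum_{i=1}^s m_\lambda(p_i) - \sum_{i=s-m+1}^s m_\lambda(p_i) = \sum_{i=1}^{s-m} m_\lambda(p_i) = m_\lambda(p_1 p_2 \cdots p_{s-m}).
\]
So \eqref{eq:unnecessary} says precisely that $m_\lambda(q) \ge m_\lambda(p_1 \cdots p_{s-m})$ for every eigenvalue $\lambda$ of $A$. The divisibility $p_1 \cdots p_{s-m} \mid q$ in $F[x]$ is equivalent (by unique factorization, passing to $\overline{F}[x]$ and using that $p_1\cdots p_{s-m}, q \in F[x]$) to the inequality $m_\lambda(q) \ge m_\lambda(p_1 \cdots p_{s-m})$ holding for every root $\lambda \in \overline{F}$ of $p_1 \cdots p_{s-m}$. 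The one gap between the two formulations is that \eqref{eq:unnecessary} quantifies over eigenvalues of $A$ (i.e. roots of $p_s = p_A$'s largest invariant factor, equivalently roots of $p_A$), whereas divisibility quantifies over roots of $p_1 \cdots p_{s-m}$; but every root of $p_1 \cdots p_{s-m}$ is a root of $p_s$ since $p_1 \mid \cdots \mid p_{s-m} \mid p_s$, so the root set of $p_1\cdots p_{s-m}$ is contained in the set of eigenvalues of $A$, and for a $\lambda$ that is an eigenvalue of $A$ but not a root of $p_1 \cdots p_{s-m}$ the inequality $m_\lambda(q) \ge 0$ is vacuous. Thus the two quantifier ranges give the same set of nontrivial constraints, and the equivalence follows.

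The only real point requiring care — and the step I would write out most carefully — is the combinatorial identification of $\sum_{j=1}^m k_{\lambda,j}$ with $\sum_{i=s-m+1}^s m_\lambda(p_i)$, including the boundary behavior when the number of Jordan blocks for $\lambda$ is smaller than $m$ (some $m_\lambda(p_i)$ vanish) or when $s < m$ (we pad the invariant factor list with $1$'s on the left). Both the "eventually zero sequence" convention for $k_{\lambda,j}$ adopted in Section 2 and the convention $p_i = 1$ for $i \le 0$ make this bookkeeping consistent, so there is no genuine obstacle here, merely a need to state the conventions precisely. Everything else is the standard translation between rational and Jordan canonical forms and the elementary fact that divisibility of polynomials over $F$ can be tested by comparing root multiplicities over $\overline{F}$.
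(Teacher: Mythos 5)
Your proposal is correct and follows essentially the same route as the paper: identify the Jordan block sizes for $\lambda$ with the multiplicities $m_\lambda(p_i)$ (at most one block per invariant factor), conclude $\sum_{j=1}^m k_{\lambda,j}=\sum_{i=s-m+1}^{s}m_\lambda(p_i)$, and reduce \eqref{eq:unnecessary} to $m_\lambda(q)\ge m_\lambda(p_1\cdots p_{s-m})$ for all relevant $\lambda$, which is the divisibility statement. Your extra care about the quantifier range (eigenvalues of $A$ versus roots of $p_1\cdots p_{s-m}$) and the padding conventions only makes explicit what the paper treats in one line.
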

\begin{proof}

The Jordan form for $A$ is the direct sum of
Jordan blocks from the Jordan decompositions of the \boxed{p_i} . \ But %
\boxed{p_i} has $p_{i}$ as its minimal and characteristic polynomial, and so
there can be at most one Jordan block with a given eigenvalue in the Jordan
decomposition of \boxed{p_i} . \ Since $p_{1} \mid p_{2}\mid \cdots \mid p_{s}$, the
largest $m$ Jordan  blocks for an eigenvalue $\lambda $ come from $\boxed{p_s},\dots, \boxed{p_{s-m+1}} $; thus
\[ \sum_{j=1}^m k_{\lambda,j}=\sum_{j=s-m+1}^s m_\lambda(p_j)\]
while
\[ \alg_{\lambda}(A)=\sum_{j=1}^s m_\lambda(p_j).\]

 So \eqref{eq:unnecessary} is equivalent to $m_{\lambda }(q)\geq
\sum\limits_{i=1}^{s-m}m_{\lambda }(p_{i})$ for each eigenvalue $\lambda$ of $A$ . \ Since the $p_{i}$'s \ have
only eigenvalues of $A$ as roots, this amounts to $p_{1}\cdots p_{s-m}\mid q$
\end{proof}
\begin{prop} \label{prop:sufficient}
If $q(x) \in F[x]$ is monic of degree $n$ and satisfies condition \eqref{eq:unnecessary} in Proposition \ref{prop:condition} then there exists a matrix $B$ over $F$ of rank at most $m$ with $p_{A+B}=q$.
\end{prop}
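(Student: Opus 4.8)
The plan is to start from Proposition~\ref{prop:condition_rational}, which lets us replace condition \eqref{eq:unnecessary} by the single divisibility $p_1 p_2 \cdots p_{s-m}\mid q$, and then to build $B$ blockwise. Write $q=(p_1\cdots p_{s-m})\,q'$ with $q'$ monic of degree $N:=\sum_{i=s-m+1}^{s}\deg p_i$ (when $s\le m$ the product $p_1\cdots p_{s-m}$ is empty and $q'=q$). Since $A$ is in rational canonical form it is block diagonal, with upper-left block $M:=\boxed{p_s}\oplus\boxed{p_{s-1}}\oplus\cdots\oplus\boxed{p_{s-m+1}}$ of size $N$, while the complementary block has characteristic polynomial exactly $p_1\cdots p_{s-m}$. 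Taking $B=B'\oplus 0$ with $B'$ an $N\times N$ matrix supported on $M$ gives $p_{A+B}=p_{M+B'}\cdot p_1\cdots p_{s-m}$, so the whole problem reduces to: \emph{given a matrix $M$ over $F$ that is a direct sum of at most $m$ companion matrices and given an arbitrary monic $q'\in F[x]$ whose degree equals the size of $M$, produce $B'$ over $F$ with $\rank(B')\le m$ and $p_{M+B'}=q'$.}

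For the reduced problem I would \emph{not} try to factor $q'$ into $m$ monic factors matching the block sizes, since $q'$ may be irreducible over $F$. Instead I would proceed in two steps. \textbf{Step 1 (make $M$ cyclic by a rank-$\le(m{-}1)$ perturbation).} Label the standard basis of the $j$-th companion block ($1\le j\le m$) by $u^{(j)}_1,\dots,u^{(j)}_{d_j}$, so $Mu^{(j)}_i=u^{(j)}_{i+1}$ for $i<d_j$. Let $B_1$ send $u^{(j)}_{d_j}\mapsto u^{(j+1)}_{1}$ for $1\le j\le m-1$ and annihilate every other basis vector; its $m-1$ nonzero entries lie in distinct rows and columns, so $\rank(B_1)\le m-1$. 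Computing the Krylov subspace of $u^{(1)}_1$ under $M+B_1$ shows it is the whole space: iterating $M+B_1$ runs through block $1$, and since $(M+B_1)u^{(1)}_{d_1}$ is an element of block $1$ plus $u^{(2)}_1$ it then enters block $2$, and so on; hence $M+B_1$ is non-derogatory. \textbf{Step 2 (fix the characteristic polynomial).} Being cyclic, $M+B_1$ is similar over $F$ to the companion matrix $\boxed{g}$ of $g:=p_{M+B_1}$, say $P(M+B_1)P^{-1}=\boxed{g}$; then $\boxed{q'}-\boxed{g}$ has rank at most $1$ (both are companion matrices of degree-$N$ polynomials, hence agree outside a single column), so $B_2:=P^{-1}(\boxed{q'}-\boxed{g})P$ has rank $\le 1$ and $p_{M+B_1+B_2}=q'$. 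Setting $B':=B_1+B_2$ solves the reduced problem with $\rank(B')\le m$, and all matrices produced have entries in $F$, so no hypothesis on $F$ is needed.

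A few degenerate cases call for routine bookkeeping: when $m\ge s$ one simply takes $M=A$ (which has at most $s\le m$ companion blocks, so Step 1 costs rank $\le s-1$), invariant factors equal to $1$ give empty blocks that are discarded, and $N=0$ is vacuous. I expect the main obstacle to be Step 1 — proving rigorously that "chaining" the companion blocks via $B_1$ genuinely yields a cyclic matrix, i.e.\ that $u^{(1)}_1$ is a cyclic vector for $M+B_1$; once that is established, Step 2 and the reduction are standard linear algebra. A fully explicit instance of this construction is carried out in the case $m=2$ at the end of the section.
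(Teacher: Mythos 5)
Your proof is correct, and while it shares the paper's overall skeleton — pass to the rational canonical form, use Proposition \ref{prop:condition_rational} to write $q=(p_1\cdots p_{s-m})h$, and confine the perturbation to the top $m$ companion blocks $M=\boxed{p_s}\oplus\cdots\oplus\boxed{p_{s-m+1}}$ — the way you realize the perturbation of $M$ is genuinely different. The paper simply writes down the answer: it replaces the $m$ columns of $A$ indexed by $\delta_1,\dots,\delta_m$ so that the top-left $\delta_m\times\delta_m$ corner becomes, literally, the companion matrix $\boxed{h}$; the new matrix is visibly $\boxed{h}\oplus\boxed{p_{s-m}}\oplus\cdots\oplus\boxed{p_1}$, and the rank bound is immediate because only $m$ columns changed. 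You instead split the rank budget as $(m-1)+1$: a rank-$(m-1)$ "chaining" perturbation that makes $M$ non-derogatory (your $M+B_1$ is an unreduced Hessenberg matrix — every subdiagonal entry is now $1$ and nothing lies below the subdiagonal — so $e_1$ is a cyclic vector by the usual triangularity of the Krylov sequence; the step you flag as the main obstacle is in fact standard and rigorous), followed by a rank-$1$ correction of the characteristic polynomial via similarity to a companion matrix. Your route costs a little more machinery (the similarity $P$, so the final $B$ is not explicit) but is more modular and makes transparent *why* $m$ blocks can be merged at rank cost $m$: one unit of rank per "weld" plus one to retune the polynomial. The paper's route is more elementary and fully constructive. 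Both are complete proofs; if you adopt yours, just state explicitly the two standard facts you rely on (unreduced Hessenberg matrices are non-derogatory, and a non-derogatory matrix is similar over $F$ to the companion matrix of its characteristic polynomial).
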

\begin{proof}

 If condition \eqref{eq:unnecessary} holds then by Proposition \ref{prop:condition_rational} we have $p_{1}\cdots p_{s-m}\mid q$; set $h=q/(p_{1}\cdots p_{s-m})$. 
Let $d_i$ be the degree of $p_i$ for $i=1,\dots,s$.

Certainly our goal is accomplished if we are able to create a matrix with
characteristic polynomial  $q$ by replacing $m$ columns of $A$ with new
columns whose entries are in $F$.  Let
$A_{i\text{ }}$ be the $i$th column of $A$ and let $e_{i}$ be the column
vector with $1$ in position $i$ and $0$ elsewhere. Also for $1\leq i\leq m$
let $\delta _{i}=\sum\limits_{j=0}^{i-1}d_{s-j}$. \ \ (Note that $\deg
h=\delta _{m}$.) \ We claim that we can alter columns $\delta _{1},\delta
_{2},\dots ,\delta _{m}$ \ of $A$\ so that the first $\delta _{m}$ rows and
columns constitute \boxed{h}, the companion matrix of $h$. \  For each $%
i\notin \{\delta _{1},\delta _{2},\dots ,\delta _{m}\}$ where $1\leq i\leq
\delta _{m}$ we already have $A_{i\text{ }}=e_{i+1}.$ \ To create \boxed{h}
we need only replace $A_{i\text{ }}$with $e_{i+1}$ for $i=\delta _{1},\delta
_{2},\dots ,\delta _{m-1}$ and replace $A_{\delta _{m}\text{ }}$with $%
[-b_{0},-b_{1},\dots ,-b_{\delta _{m}-1},0,\dots ,0]^{T}$ where $%
h(x)=x^{\delta _{m}}+b_{\delta _{m}-1}x^{\delta _{m}-1}+\cdots +b_{1}x+b_{0}$%
.

The resulting matrix, though no longer necessarily in rational canonical
form, is the direct sum of blocks \boxed{h},\boxed{p_{s-m}},\dots,\boxed{p_1}
and so has characteristic polynomial $hp_{s-m}\cdots p_{1}=q$ as desired.
\end{proof}

It may be helpful to look at the above proof in the special cases $m=1$ and $m=2$.

In the case $m=1$ we have $q=hp_1p_2 \cdots p_{s-1}$. Setting $d=d_s$ which is the degree of both $p_s$ and $h$, we may write
\begin{equation*}
p_{s}=x^{d}+a_{d-1}x^{d-1}+\cdots +a_{1}x+a_{0}
\end{equation*}
and%
\begin{equation*}
h=x^{d}+b_{d-1}x^{d-1}+\cdots +b_{1}x+b_{0},
\end{equation*}%
So
\begin{equation*}
\boxed{p_s}=\left[ 
\begin{array}{lllll}
0 & 0 & \cdots & 0 & -a_{0} \\ 
1 & 0 & \cdots & 0 & -a_{1} \\ 
0 & 1 & \cdots & 0 & -a_{2} \\ 
\vdots & \vdots &  & \vdots & \vdots \\ 
0 & 0 & \cdots & 1 & -a_{d-1}%
\end{array}%
\right]
\end{equation*}%
and%
\begin{equation*}
\boxed{h}=\left[ 
\begin{array}{lllll}
0 & 0 & \cdots & 0 & -b_{0} \\ 
1 & 0 & \cdots & 0 & -b_{1} \\ 
0 & 1 & \cdots & 0 & -b_{2} \\ 
\vdots & \vdots &  & \vdots & \vdots \\ 
0 & 0 & \cdots & 1 & -b_{d-1}%
\end{array}%
\right]
\end{equation*}
Taking $B$ to be a matrix with $0$ everywhere but the first $d$ entries of
column $d$, and having for those entries $\left[ a_{0}-b_{0}\ \ \
a_{1}-b_{1}\ \ \cdots \ \ a_{d-1}-b_{d-1}\right] ^{T}$, we find in forming $%
A+B$ that we have simply replaced \boxed{p_s} with \boxed{h}, and it is clear
that $B$ has coefficients in $F$, that $B$ has rank one and that the
characteristic polynomial of $A+B$ is $hp_1p_2 \cdots p_{s-1}=q$.

In the case $m=2$ we have $ q=hp_{1}\cdots p_{s-2}$, and we can modify just 2 columns of the (rational
canonical form of the) matrix $A$ to transform the leftmost two blocks%
\begin{equation*}
\left[ 
\begin{array}{ll}
\boxed{p_s} &  \\ 
& \boxed{p_{s-1}}%
\end{array}%
\right] =\left[ 
\begin{array}{ll}
\begin{array}{lllll}
0 & 0 & \cdots & 0 & -a_{0}^{(s)} \\ 
1 & 0 & \cdots & 0 & -a_{1}^{(s)} \\ 
0 & 1 & \cdots & 0 & -a_{2}^{(s)} \\ 
\vdots & \vdots &  & \vdots & \vdots \\ 
0 & 0 & \cdots & 1 & -a_{d_{s}-1}^{(s)}%
\end{array}
& 
\begin{array}{lllll}
&  &  &  &  \\ 
&  &  &  &  \\ 
&  & 0 &  &  \\ 
&  &  &  &  \\ 
&  &  &  & 
\end{array}
\\ 
\begin{array}{lllll}
&  &  &  &  \\ 
&  &  &  &  \\ 
&  & 0 &  &  \\ 
&  &  &  &  \\ 
&  &  &  & 
\end{array}
& 
\begin{array}{lllll}
0 & 0 & \cdots & 0 & -a_{0}^{(s-1)} \\ 
1 & 0 & \cdots & 0 & -a_{1}^{(s-1)} \\ 
0 & 1 & \cdots & 0 & -a_{2}^{(s-1)} \\ 
\vdots & \vdots &  & \vdots & \vdots \\ 
0 & 0 & \cdots & 1 & -a_{d_{s-1}-1}^{(s-1)}%
\end{array}%
\end{array}%
\right]
\end{equation*}

into 
\begin{equation*}
\boxed{h}=\left[ 
\begin{array}{llllllllll}
0 & 0 & \cdots & 0 & 0 &  &  &  &  & -b_{0} \\ 
1 & 0 & \cdots & 0 & 0 &  &  &  &  & -b_{1} \\ 
0 & 1 & \cdots & 0 & 0 &  &  & 0 &  & -b_{2} \\ 
\vdots & \vdots &  & \vdots & \vdots &  &  &  &  & \vdots \\ 
0 & 0 & \cdots & 1 & 0 & 0 & 0 & \cdots & 0 & -b_{d_{s}-1} \\ 
0 & 0 & \cdots & 0 & 1 & 0 & 0 & \cdots & 0 & -b_{d_{s}} \\ 
&  &  & 0 & 0 & 1 & 0 & \cdots & 0 & -b_{d_{s}+1} \\ 
&  & 0 &  &  &  & \ddots &  &  & \vdots \\ 
&  &  &  &  &  &  &  &  &  \\ 
0 & 0 & \cdots & 0 & 0 & 0 & 0 & \cdots & 1 & -b_{d_{s}+d_{s-1}-1}%
\end{array}%
\right]
\end{equation*}%
where $p_{i}=x^{d_{i}}+a_{d_{i}-1}^{(i)} x^{d_{i}-1}+\cdots
+a_{0}^{(i)}$ and $%
h=x^{d_{s}+d_{s-1}}+b_{d_{s}+d_{s-1}-1}x^{d_{s}+d_{s-1}-1}+\cdots
+b_{1}x+b_{0}$. \ (The two altered columns are column $d_{s}$ and column $%
d_{s}+d_{s-1}$.) \ This yields a matrix $B$ of rank 2 such that the
characteristic polynomial of $A+B$ is $q$.

We can now prove our principal result.
\begin{proof}[Proof of Main Theorem]
Combine the sufficient condition of Proposition \ref{prop:sufficient} with the necessary condition of Proposition \ref{prop:condition}.
\end{proof}

\section*{Acknowledgements}
We are very grateful to Professor Lyle Muller's encouragement, discussions, and support via BrainsCAN, and the NSF through a NeuroNex award (\#201576). J.M. gratefully acknowledges the Natural Sciences and Engineering Research Council of Canada (NSERC) grant R0370A01 and the Western University Faculty of Science Distinguished Professorship in 2020-2021. Further, J.M., T.T.N., and F.W.P. gratefully acknowledge the support from the Western Academy For Advanced Research at Western University. Finally, we thank an anonymous referee for their help in improving the quality and clarity of this paper.

\end{document}